\def\theequation{\thesection.\@arabic \c@equation}
\def\theenumi{\@roman\c@enumi}
\def\@citecolor{blue}
\def\@linkcolor{blue}
\def\@urlcolor{blue}
\newtheorem{theorem}[equation]{Theorem}
\newtheorem{lemma}[equation]{Lemma}
\newtheorem{proposition}[equation]{Proposition}
\newtheorem{claim*}{Claim}
\theoremstyle{definition}
\newtheorem{remark}[equation]{Remark}
\newtheorem{eg}[equation]{Example}
\newtheorem{definition}[equation]{Definition}
\newtheorem{notn}[equation]{Notation}
\newcommand{\bbB}{{\mathbb B}}
\newcommand{\naturals}{\mathbb N}
\newcommand{\ints}{\mathbb Z}
\newcommand{\define}{\emph}
\DeclareMathOperator{\Spec}{Spec}
\DeclareMathOperator{\height}{ht}
\DeclareMathOperator{\codim}{codim}
\DeclareMathOperator{\projdim}{pd}
\DeclareMathOperator{\Tor}{Tor}
\newsavebox{\upperboundtheorem}
\title{Bounds for the Multiplicity of Gorenstein algebras}
\author[S.~El~Khoury]{Sabine El Khoury} 
\address{Department of Mathematics, American University of Beirut, Beirut,
Lebanon.}
\email{se24@aub.edu.lb} 
\author[M.~Kummini]{Manoj Kummini}
\address{Chennai Mathematical Institute, Siruseri,
Tamilnadu 603103 India.}
\email{mkummini@cmi.ac.in}
\author[H.~Srinivasan]{Hema Srinivasan}
\address{Department of Mathematics, University of Missouri, Columbia,
Missouri, USA.}
\email{hema@math.missouri.edu}
\thanks{
The first author thanks the American University of Beirut for supporting
part of this work through a long term development grant, and Mathematical
Sciences Research Institute, Berkeley CA for hospitality. The second author
thanks MSRI for support during Fall 2012. In addition, they thank 
the University of Missouri, Columbia, MO for hospitality.}
\begin{document}

\begin{abstract}
We prove upper bounds for the Hilbert-Samuel multiplicity of standard
graded Gorenstein algebras. The main tool that we use is Boij-S\"oderberg
theory to obtain a decomposition of the Betti table of a Gorenstein algebra
as the sum of rational multiples of symmetrized pure tables. Our bound
agrees with the one in the quasi-pure case obtained by Srinivasan 
[J. Algebra, vol.~208, no.~2, (1998)].
\end{abstract}

\maketitle

\section{Introduction}
Let $\Bbbk$ be a field and $R$ a standard graded polynomial ring over
$\Bbbk$, i.e., $R=\bigoplus_{i \in \naturals} R_i$ as
$\Bbbk$-vector-spaces, $R_1$ is a finite dimensional $\Bbbk$-vector-space
and $R$ is generated as a $\Bbbk$-algebra by $R_1$. Let $M$ be a finitely
generated graded $R$-module and $e(M)$ the 
Hilbert-Samuel multiplicity of $M$. We say that
$M$ is \define{Gorenstein} if $M$ is Cohen-Macaulay and a minimal
$R$-free resolution of $M$ is self-dual.
In this paper, we prove the following theorem:

\def\upperboundtheorem{Let $M$ be a graded Gorenstein $R$-module that is
minimally generated homogeneous elements of degree zero.
Let $s = \codim M$ and $k = \lfloor \frac{s}{2} \rfloor$.
Let $\beta_0 (M)$ denote the minimal number of generators of $M$.
 For $0 \leq i \leq s$, write 
$m_i = m_i(M) = \min\{j : \Tor_i^R(\Bbbk, M)_j \neq 0\}$ and
$M_i =M_i(M) = \max\{j : \Tor_i^R(\Bbbk, M)_j \neq 0\}$.
Then
\[
e(M) \leq \frac{\beta_0(M)}{s!}
\prod_{i=1}^k \min\left\{M_i,
{\left\lfloor\frac{m_s}{2}\right\rfloor}\right\}
\prod_{i=k+1}^s \max\left\{m_{i}, 
{\left\lceil\frac{m_s}{2}\right\rceil}\right\}.
\]}

\begin{theorem}
\label{theorem:UpperBound}
\upperboundtheorem
\end{theorem}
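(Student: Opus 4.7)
The plan is to combine Boij-Söderberg theory with the self-duality of Gorenstein resolutions, reducing the bound to a combinatorial inequality on products of degree sequences.

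First, I would apply the Eisenbud-Schreyer decomposition theorem to write
\[
\beta(M) = \sum_{d} c_d\,\pi(d),
\]
as a positive rational combination of normalized pure Betti tables $\pi(d)$, where $d$ ranges over strictly increasing integer sequences with $m_i\le d_i\le M_i$. Since $M$ is generated in degree zero, and since Gorenstein self-duality forces $\Tor_s^R(\Bbbk,M)$ to be concentrated in a single degree $m_s = M_s$ of rank $\beta_0(M)$, every $d$ occurring in the decomposition has $d_0=0$ and $d_s=m_s$. The functionals $e$ and $\beta_0$ are linear on Betti tables, and the Herzog--Kühl equations give $e(\pi(d))/\beta_0(\pi(d)) = \prod_{i=1}^s d_i/s!$, so $e(M)/\beta_0(M)$ is a convex combination of the ratios $\prod_{i=1}^s d_i/s!$.

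Next, I would exploit the Gorenstein symmetry $\beta_{i,j}(M) = \beta_{s-i,m_s-j}(M)$ by symmetrizing the decomposition. Reflecting each $d$ to $d^{\ast}$ with $d^{\ast}_i = m_s - d_{s-i}$ yields another valid Boij--Söderberg decomposition of the same table; averaging the two produces a positive combination of \emph{symmetrized pure tables} $\tilde\pi(d)$, each built from the pair $\pi(d), \pi(d^{\ast})$ scaled so that their sum is symmetric. Writing $P = \prod_{i=1}^{s-1} d_i$ and $Q = \prod_{i=1}^{s-1}(m_s-d_i)$, a direct Herzog--Kühl computation shows that
\[
\frac{e(\tilde\pi(d))}{\beta_0(\tilde\pi(d))} = \frac{2\, m_s\, P\, Q}{s!\,(P+Q)}.
\]

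The proof then reduces to a combinatorial inequality: for every sequence with $d_i\in[m_i,M_i]$ (and $d_0=0$, $d_s=m_s$), the displayed symmetrized ratio is at most the right-hand side of the theorem divided by $\beta_0(M)$. Pairing indices $i$ and $s-i$, the relation $d_i+(m_s-d_i)=m_s$ forces one of $d_i$ or $m_s-d_i$ to be at most $\lfloor m_s/2\rfloor$ and the other at least $\lceil m_s/2\rceil$; combined with the constraint $d_i\in[m_i,M_i]$ and the Gorenstein identity $M_{s-i}+m_i=m_s$, this yields a factor-by-factor bound of the form $d_i(m_s-d_i)\le \min\{M_i,\lfloor m_s/2\rfloor\}\cdot\max\{m_{s-i},\lceil m_s/2\rceil\}$. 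Taking products over $i$ and using $2PQ/(P+Q)\le\sqrt{PQ}$ to dominate the harmonic-mean expression should then give the claimed bound. I expect the main obstacle to be the bookkeeping of this pairing when $s$ is even (so that the middle index $i = s/2$ is self-paired and receives only one factor in the bound), together with the careful tracking of floor/ceiling arithmetic needed when $m_s$ is odd.
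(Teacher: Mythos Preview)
Your overall architecture matches the paper's: write $\beta(M)$ as a nonnegative rational combination of symmetrized pure tables (the paper does this as Proposition~\ref{proposition:moduleBettiDec}), compute that each symmetrized table $\tilde\pi(d)$ has $e/\beta_0 = 2m_sPQ/\bigl(s!(P+Q)\bigr)$ with $P=\prod_{i=1}^{s-1}d_i$ and $Q=\prod_{i=1}^{s-1}(m_s-d_i)$, and then bound this ratio term by term.

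The gap is in the last step. Your route is $2PQ/(P+Q)\le\sqrt{PQ}$ (harmonic--geometric mean) followed by the factor-by-factor estimate $d_i(m_s-d_i)\le \min\{M_i,\lfloor m_s/2\rfloor\}\cdot\max\{m_{s-i},\lceil m_s/2\rceil\}$. That estimate is correct, but taking the product over $i=1,\dots,s-1$ yields
\[
PQ\;\le\;\prod_{i=1}^{s-1}\min\{M_i,\lfloor m_s/2\rfloor\}\cdot\prod_{i=1}^{s-1}\max\{m_i,\lceil m_s/2\rceil\},
\]
and this right-hand side is generally \emph{larger} than $C^2$ (where $C=\prod_{i\le k}\min\{M_i,\lfloor m_s/2\rfloor\}\prod_{k<i<s}\max\{m_i,\lceil m_s/2\rceil\}$), because the ``wrong halves'' of the min/max dominate the ``right halves''. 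So $\sqrt{PQ}\le C$ does not follow from your argument. Worse, the difficulty you flag for even $s$ with odd $m_s$ is not mere bookkeeping: the self-paired middle index contributes $d_k(m_s-d_k)\le\lfloor m_s/2\rfloor\lceil m_s/2\rceil$ to $PQ$ but only a single factor $\lfloor m_s/2\rfloor$ to $C$, and $\lfloor m_s/2\rfloor\lceil m_s/2\rceil>\lfloor m_s/2\rfloor^2$ when $m_s$ is odd. For instance with $s=4$, $m_s=11$ and $d=(0,1,5,10,11)$ one gets $\sqrt{PQ}=\sqrt{50\cdot 60}\approx 54.8$ and even $2PQ/(P+Q)\approx 54.5$, whereas the corresponding $C$ equals $1\cdot 5\cdot 10=50$. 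So neither the intermediate inequality $\sqrt{PQ}\le C$ nor the target per-term inequality holds for every admissible degree sequence individually.

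The paper avoids this by never passing through $\sqrt{PQ}$ and never comparing a single symmetrized term directly to the global bound. It attaches to each degree sequence $d$ its own quantity $\Psi_d$ (built from the entries of $d$, not from the global $m_i,M_i$), proves $b_d\Psi_d\ge 2$ by an induction that moves $d$ toward its dual one coordinate at a time (Proposition~\ref{proposition:upperBdHoldsForSymmPureBettiTables}), and separately proves the monotonicity $\Psi_{d^\alpha}\le\Psi_{d^0}$ along the chain of degree sequences coming from the decomposition (Lemma~\ref{lemma:comparisonBettiPsi}). The final step is then
\[
s!\,e(M)=\sum_\alpha 2r_\alpha \;\le\; \sum_\alpha r_\alpha b_{d^\alpha}\Psi_{d^\alpha}\;\le\;\Bigl(\sum_\alpha r_\alpha b_{d^\alpha}\Bigr)\Psi_{d^0}=\beta_0(M)\,\Psi_{d^0}.
\]
The point is that the per-term bound uses the \emph{local} $\Psi_{d^\alpha}$, and the passage to the global $\Psi_{d^0}$ happens only after weighting by $b_{d^\alpha}$; your harmonic-mean shortcut tries to collapse these two stages into one, and the example above shows that this collapse is not legitimate.
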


There is a history of looking for bounds for $e(M)$ in terms of the
homological invariants $m_i(M)$ and $M_i(M)$.  Note that these are,
respectively, the minimum and maximum twists in the free modules in a
minimal $R$-free resolution of $M$.  When $M = R/I$ for a homogeneous
ideal $I$ of $R$, then the conjectures of C.~Huneke and Srinivasan,
see~\cite[Conjecture~1]{HS}, and of J.~Herzog and Srinivasan,
see~\cite[Conjecture~2]{HS}, proposed bounds for $e(M)$. These conjectures
were proved using the more general framework of Boij-S\"oderberg theory. The theory refers to the study of the decomposition of Betti tables of $M$
generated graded $R$-modules in terms of extremal rays in the cone
generated by the Betti tables of all the finitely generated $R$-modules, in
a certain infinite dimensional rational vector space. This decomposition was
conjectured by M.~Boij and J.~S\"oderberg~\cite[Conjecture~2.4]{BS08}, who
showed that if this stronger conjecture were true, then the conjecture of
Huneke and Srinivasan~\cite[Conjecture~1]{HS} and more
would hold. This was in
turn proved by D.~Eisenbud, G.~Fl{\o}ystad and
J.~Weyman~\cite{EFWpureResln07} in characteristic zero and Eisenbud and
F.-O.~Schreyer~\cite{ES} in a characteristic-free situation. Thereafter,
Boij and {S\"oderberg} proved that the conjecture of Herzog and 
Srinivasan~\cite[Conjecture~2]{HS} holds.
See~\cite{ES:ICMsurvey} for a survey of Boij-S\"oderberg theory.
We will need to use the Boij-S\"oderberg decomposition in our arguments; we
have summarized the relevant features in Section~\ref{sec:prelims}.

When the resolution is quasi-pure, 
i.e.  $m_i \geq M_{i-1}$ for all $i=2, \cdots, s$,
the aforementioned conjecture of Huneke
and Srinivasan were proved using the equations of Peskine-Szpiro 
coming from the additivity of the Hilbert function \cite[Theorem~1.2]{HS}.  
The duality of the resolution in the Gorenstein case can be exploited to
obtain stronger bounds.
In~\cite[Theorem~4]{S}, Srinivasan showed that if $R/I$ is Gorenstein
and has a quasi-pure $R$-resolution, then
\begin{equation}
\label{equation:SrinivasanQuasiPure}
\frac{m_1 \cdots m_k M_{k+1} \cdots M_s}{s!} \leq
e(R/I) \leq
\frac{M_1 \cdots M_k m_{k+1} \cdots m_s}{s!}.
\end{equation}
The notation here is the same as in Theorem~\ref{theorem:UpperBound}: 
$s = \height I$ and $k = \lfloor \frac{s}{2} \rfloor$. 
Theorem~\ref{theorem:UpperBound}  generalizes the upper bound in this result
to Gorenstein algebras with arbitrary resolutions. 

Let $d = \dim M$. The \define {Hilbert coefficients}  $e_i(M)$ with $0 \leq
i \leq d$ are defined by expressing the Hilbert polynomial of $M$ in the
form
\[
t \mapsto \sum_{i=1}^d (-1)^ie_i(M) \binom{t+d-1-i}{d-1-i}.
\]
For each $i$, $e_i(M)$ is an integer and $e_0(M) = e(M)$. 
Using Boij-S\"oderberg theory, 
Herzog and X.~Zheng obtained  upper and lower bounds, in the fashion
of~\cite[Conjecture~1]{HS}, for all the Hilbert coefficients of arbitrary
Cohen-Macaulay graded $R$-modules~\cite[Theorem~2.1]{HZ}.
Extending~\cite[Theorem~4]{S}, El~Khoury and Srinivasan obtained upper and
lower bounds for all Hilbert coefficients of Gorenstein quotient rings
of $R$ that have quasi-pure $R$-resolutions~\cite[Theorem~4.2]{ES1}.

In this paper, we show that the duality in the minimal free resolution of a
Gorenstein algebra can be captured in the 
Boij-S\"oderberg decomposition of its Betti table; this can be used to
obtain stronger bounds for multiplicity. We define and use the notion of
symmetrized pure Betti tables to capture the duality in the decomposition.
Our upper bound recovers the upper bound
in~\eqref{equation:SrinivasanQuasiPure}. 

The computer algebra system
\texttt{Macaulay2}~\cite{M2} 
provided valuable assistance in studying examples.  

\section{Preliminaries}
\label{sec:prelims}

\subsection*{Notation}

As earlier, $\Bbbk$ is a field and $R = \Bbbk[x_1, \ldots, x_n]$ is a
$n$-dimensional polynomial ring over $\Bbbk$ with $\deg x_i = 1$ for all 
$1 \leq i \leq n$. Let $M$ be a finitely generated graded $R$-module.
The \define{codimension} of $M$, denoted $\codim M$, is the codimension of
the support of $M$ in $\Spec R$.
The \define{graded Betti numbers} of $M$ are 
$\beta_{i,j}(M) = \dim_\Bbbk \Tor_i^R(\Bbbk, M)_j$. Note that
$\beta_{i,j}(M)$ is the number of copies of $R(-j)$ that appear at
homological degree $i$, in a minimal $R$-free resolution of $M$.
We think of the collection
$\{\beta_{i,j}(M) : 0 \leq i \leq n, j \in \ints\}$ as an element
\[
\beta(M) = \left(\beta_{i,j}(M)\right)_{0 \leq i \leq n, j \in \naturals} 
\in \bbB := 
\bigoplus\limits_{i=0}^n \bigoplus\limits_{j \in  \mathbb{Z}} \mathbb{Q},
\]
and call it the \emph{Betti table} of $M$.  
In general, 
a \define{rational Betti table} $\beta$ is an element
$\beta = \left(\beta_{i,j}\right)_{0 \leq i \leq n, j \in \naturals} \in \bbB$ 
such that: 
\begin{asparaenum}
\item
for all $0 \leq i \leq n$, $\beta_{i,j} = 0$ for all $j$ such
that $|j| \gg 0$,
\item for all $i>0$ and for all $j$, if 
$\beta_{i,j} \neq 0$ then there exists $j' < j$ such that $\beta_{i-1,j'}
\neq 0$.
\end{asparaenum}
Let
$\beta = \left(\beta_{i,j}\right)_{0 \leq i \leq n, j \in \naturals}$ be a
rational Betti table.
Its \define{length} is $\max \{i : \beta_{i,j} \neq 0 \;\text{for some}\; j\}$.
For $0 \leq i \leq \mathrm{length}(\beta)$, write 
$t_i(\beta) = \min\{j :  \beta_{i,j} \neq 0\}$ and
$T_i(\beta) = \max\{j :  \beta_{i,j} \neq 0\}$.

\subsection*{Boij-S\"oderberg theory}

We give here a summary of Boij-S\"oderberg theory that is relevant to us.
For details, see~\cite{ES, ES:ICMsurvey}; an expository account
is~\cite{FloyBSSurv11}.

A \define{degree sequence} of \define{length} $s$ is an increasing sequence
$d = (d_0 < d_1 < \cdots < d_s)$ of integers. For every such degree
sequence $d$, there is a finitely generated graded Cohen-Macaulay
$R$-module $M$ of codimension $s$ such that for all $0 \leq i \leq s$,
$\beta_{i,j}(M) \neq 0$ if and only if $j = d_i$; for such a module $M$, we
will say that $d$ is the \define{type} of its resolution.  
Moreover, 
by the Herzog-K\"uhl equations~\cite{HK},
$\beta(M)$ is a positive rational multiple of the pure Betti
table, which we denote by $\beta(d)$, given by:
\begin{equation}
\label{equation:HerzogKuhl}
\beta(d)_{i,j} = 
\begin{cases}
\frac{1}{\prod_{l\neq i} |d_l-d_i|}, & 0 \leq i \leq s \;\text{and}\; j=d_i \\
0, & \;\text{otherwise}.
\end{cases}
\end{equation}
 
Let  $d = (d_0 < \cdots < d_s)$ be a degree sequence.  We call the Betti
table $\beta(d)$ defined in~\eqref{equation:HerzogKuhl}, the \define{pure
Betti table} associated to $d$. For $0 \leq i \leq s$, 
write $\beta_i(d) = \beta(d)_{i,d_i}$.
Eisenbud, {Fl{\o}ystad} and Weyman~\cite[Theorem~0.1]{EFWpureResln07} (in
characteristic zero) and Eisenbud and
Schreyer~\cite[Theorem~0.1]{ES} showed that for all degree sequences $d$,
there is a Cohen-Macaulay $R$-module $M$ such that $\beta(M)$ is a rational
multiple of $\beta(d)$. Moreover, for all $R$-modules $M$,
$\beta(M)$ can be written as a non-negative rational combination of the
$\beta(d)$; if we take a saturated chain of degree sequences, then the
non-negative rational coefficients in the decomposition are
unique~\cite[Theorem~0.2]{ES}.

\subsection*{Self-dual resolutions and symmetrized Betti tables}

Let $\beta$ be a Betti table.  Let $s$ and $N$ be integers.
We say that $\beta$ is 
\define{$(s,N)$-self-dual} if $\beta_{i,j} = \beta_{s-i, N-j}$ for all $i,j$. 
We say that $\beta$ is \define{self-dual} 
if there exist $s$ and $N$ such that $\beta$ is $(s,N)$-self-dual.
If $\beta$ is self-dual, then $s$ is the length of $\beta$ and
$N = \max\{j : \beta_{s,j} \neq 0\} + \min\{j : \beta_{0,j} \neq 0\}$.

\begin{definition} 
\label{definition:symmetrized}
Let  $d = (d_0 < \cdots < d_s)$ be a degree sequence and $N \geq d_0+d_s$.
Let $d^{\vee, N} = (N-d_s< \cdots < N-d_0)$. 
Denote the pure Betti table
associated to $d^{\vee, N}$ by $\beta^{\vee, N}(d)$. Similarly, set 
$\beta^{\vee, N}_i(d) = \beta^{\vee, N}(d)_{i, N-d_{s-i}}$. 
Let $\beta_{\mathrm{sym}}(d,N) = \beta(d) + \beta^{\vee, N}(d)$. 
We call $\beta_{\mathrm{sym}}(d,N) $ the \define{symmetrized pure Betti
table}, given by symmetrizing $d$ with respect to $N$.
\end{definition}

C.~Peskine and L.~Szpiro~\cite{PS} observed that for a finitely generated
graded $R$-module $M$ with $s=\codim M$,
\[
\sum_{i=0}^{\projdim M} \sum_{j} (-1)^i\beta_{i,j}j^l = 
\begin{cases}
0 & \;\text{if}\; 0 \leq l < s \\  
(-1)^ss!e(M) & \;\text{if}\; l = s.
\end{cases}
\]
(Here $\projdim M$ is the projective dimension of $M$, or equivalently, the
length of $\beta(M)$.) Suppose that $d$ is a degree sequence of length $s$.
Since the pure Betti table $\beta(d)$ is, up to
multiplication by a rational number, the Betti table of a Cohen-Macaulay
$R$-module of codimension $s$, we see  that 
$\sum_{i=0}^s (-1)^i\beta_{i}(d)d_i^l = 0$ for all $0 \leq l < s$.
Further, by direct calculation using~\eqref{equation:HerzogKuhl}
we can see that$\sum_{i=0}^s (-1)^i\beta_{i}(d)d_i^s = (-1)^s$. 
Therefore we set
\begin{equation}
\label{equation:multPureBettiTable}
e(\beta(d)) = \frac{1}{s!}  \quad\text{and}\quad
e (\beta_{\mathrm{sym}}(d,N)) = \frac{2}{s!}.
\end{equation}

We now argue that the Betti table of a Gorenstein module can be
decomposed into a non-negative rational combination of symmetrized pure
Betti tables.

\begin{proposition}
\label{proposition:moduleBettiDec}
Let $M$ be finitely generated graded Cohen-Macaulay $R$-module with $\codim
M = s$, generated minimally by homogeneous elements of degree zero. 
Suppose that $\beta(M)$ is self-dual. 
Let $N =  T_s(M)$.
Then there exist degree sequences $d^\alpha, 0 \leq \alpha \leq a$ for some
$a \in \naturals$ and positive rational numbers $r_\alpha, 0 \leq \alpha \leq a$ such
that 
\[
\beta(M) = \sum_{\alpha=0}^a r_\alpha\beta_{\mathrm{sym}}(d^\alpha, N).
\]
Moreover,
\begin{enumerate}  
\item the $d^\alpha$ are degree sequences of length $s$ and they are not
$(s,N)$-dual to each other.
\item $d^{\alpha+1} > d^{\alpha}$ for all $0 \leq \alpha \leq a-1$.
\item $N \ge d^{\alpha}_i+d^{\alpha}_{s-i} $ for all $\alpha$ and $i$, or
equivalently, $d^{\alpha} \leq (d^{\alpha})^{\vee, N}$ for all $\alpha$.
\end{enumerate}
\end{proposition}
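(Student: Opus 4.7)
The plan is to apply the Eisenbud--Schreyer uniqueness theorem to the Boij--S\"oderberg decomposition of $\beta(M)$, use self-duality of $\beta(M)$ to pair up the indexing degree sequences under $d \mapsto d^{\vee, N}$, and then regroup paired terms into symmetrized pure Betti tables. Concretely, I first write $\beta(M) = \sum_{i=0}^m c_i \beta(d^i)$ along a saturated chain $d^0 < d^1 < \cdots < d^m$ of degree sequences with positive coefficients $c_i$. Since $M$ is Cohen--Macaulay of codimension $s$, each $d^i$ has length exactly $s$ (the cone of codimension-$s$ CM Betti tables is generated by pure tables of length $s$). Because $M$ is minimally generated in degree zero and $N = T_s(M)$, we also have $d^i_0 = 0$ and $d^i_s \le N$ for every $i$, so each $(d^i)^{\vee, N} = (N - d^i_s < \cdots < N - d^i_0)$ is a valid degree sequence.

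Next I introduce the $(s,N)$-duality involution $\iota \colon \bbB \to \bbB$, $\iota(\gamma)_{i,j} = \gamma_{s-i,\,N-j}$. It is $\mathbb{Q}$-linear, sends $\beta(d)$ to $\beta(d^{\vee, N})$ (a direct check using~\eqref{equation:HerzogKuhl}), and reverses the componentwise partial order on degree sequences, so it carries saturated chains to saturated chains. Applying $\iota$ to the self-dual table $\beta(M)$ gives
\[
\beta(M) \;=\; \iota\bigl(\beta(M)\bigr) \;=\; \sum_{i=0}^m c_i \,\beta\bigl((d^i)^{\vee, N}\bigr),
\]
which is a decomposition along the saturated chain $(d^m)^{\vee, N} < \cdots < (d^0)^{\vee, N}$ with positive coefficients $c_m, \ldots, c_0$. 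Uniqueness of the Boij--S\"oderberg decomposition then forces $d^i = (d^{m-i})^{\vee, N}$ and $c_i = c_{m-i}$ for all $0 \le i \le m$.

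Finally I regroup. For $2i < m$, pair $d^i$ with $d^{m-i} = (d^i)^{\vee, N}$; they contribute $c_i\bigl(\beta(d^i) + \beta((d^i)^{\vee, N})\bigr) = c_i\,\beta_{\mathrm{sym}}(d^i, N)$. If $m$ is even, the middle element $d^{m/2}$ is $(s,N)$-self-dual and contributes $\tfrac{c_{m/2}}{2}\beta_{\mathrm{sym}}(d^{m/2}, N)$. Re-indexing the surviving degree sequences as $d^\alpha$ in increasing order yields the asserted decomposition. Condition (ii) holds because $\{d^\alpha\}$ is a subchain of the original chain; condition (i) holds since, by construction, distinct $d^\alpha$ come from distinct duality pairs; and condition (iii), equivalent to $d^\alpha \le (d^\alpha)^{\vee, N}$, follows from $d^i \le d^{m-i} = (d^i)^{\vee, N}$ whenever $2i \le m$.

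The main obstacle will be invoking the uniqueness of the Boij--S\"oderberg decomposition in exactly the right form: we need both decompositions (the original and the $\iota$-transformed one) to be recognized as decompositions along saturated chains with positive coefficients, so that uniqueness forces the pairing $d^i \leftrightarrow (d^{m-i})^{\vee, N}$ term-by-term. Once that pairing is established, the regrouping into symmetrized pure tables and the verification of (i)--(iii) are essentially bookkeeping.
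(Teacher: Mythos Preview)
Your proof is correct and follows essentially the same approach as the paper: decompose $\beta(M)$ via Boij--S\"oderberg, exploit the $(s,N)$-duality involution together with uniqueness of the decomposition, and regroup into symmetrized pure tables. The only cosmetic difference is that the paper fixes a duality-symmetric saturated chain in advance and reads off $r_\alpha = r'_\alpha$ from uniqueness along that single chain, whereas you start from an arbitrary chain decomposition and deduce its symmetry a posteriori by comparing it with its $\iota$-image; both routes rest on the same uniqueness principle (the greedy Boij--S\"oderberg algorithm determines the positive-coefficient degree sequences uniquely), which you correctly flag as the one point needing care.
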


\begin{proof}
We prove this similar to the Decomposition Algorithm of~\cite
[p.~864]{ES}. First, for the duration of this proof, we will say that
a rational Betti table $\beta \geq 0$ if $\beta_{i,j} \geq 0$ for all
$i,j$.  
Let $d^0 = (0 < t_1(M) < \cdots < t_s(M)=N)$. Then there
exist positive rational numbers $r_0$ and $r'_0$ such that 
$\beta(M) - r_0\beta(d^0) + r'_0\beta((d^0)^{\vee, N}) \geq 0$.

Let $d^{\alpha}, 0
\leq \alpha \leq a$ be a maximal (by inclusion) saturated chain of degree
sequences with $d^{\alpha}_0 = 0$ and $d^{\alpha}_s = N$ for all $\alpha$
such that $(d^{\alpha})^{\vee, N} \geq d^{\alpha}$. If we repeat this
procedure, we see that there exist non-negative
rational numbers  $r_{\alpha}$ and $r'_{\alpha}$ 
for all $0 \leq \alpha \leq a$, that are uniquely determined, such that
\[
\beta(M) = \sum_{\alpha=0}^ar_{{\alpha}} \beta(d^{\alpha}) + 
\sum_{\alpha=0}^ar'_{{\alpha}} \beta((d^{\alpha})^{\vee,N}).
\]
Since $\beta(M)$ is $(s,N)$-self-dual, $r_{\alpha} = r'_{\alpha}$ for all
$\alpha$ and all the assertions follow immediately.
\end{proof}

\section{Main Theorem}
\label{sec:proof}

In this section, we prove Theorem~\ref{theorem:UpperBound}, which we
restate here for the sake of convenience.

\begin{theorem}
\upperboundtheorem
\end{theorem}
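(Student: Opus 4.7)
The plan is to exploit the Boij--S\"oderberg decomposition from Proposition~\ref{proposition:moduleBettiDec} to reduce the theorem to a numerical inequality on a single symmetrized pure table. Apply the proposition with $N = m_s$ to write $\beta(M) = \sum_\alpha r_\alpha \beta_{\mathrm{sym}}(d^\alpha, N)$ with positive rational $r_\alpha$ and degree sequences $d^\alpha = (0, p^\alpha_1, \ldots, p^\alpha_{s-1}, N)$ satisfying $p^\alpha_i + p^\alpha_{s-i} \leq N$; since each $p^\alpha_i$ is a twist of $\beta(M)$, we have $m_i \leq p^\alpha_i \leq M_i$.

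By~\eqref{equation:multPureBettiTable}, $e(M) = (2/s!)\sum_\alpha r_\alpha$. The Herzog--K\"uhl formula~\eqref{equation:HerzogKuhl} gives $\beta_{0,0}(\beta_{\mathrm{sym}}(d^\alpha, N)) = 1/A_\alpha + 1/B_\alpha$ with $A_\alpha = N\prod_{l=1}^{s-1} p^\alpha_l$ and $B_\alpha = N\prod_{l=1}^{s-1}(N - p^\alpha_l)$, so $\beta_0(M) = \sum_\alpha r_\alpha(1/A_\alpha + 1/B_\alpha)$. Therefore
\[
\frac{s!\cdot e(M)}{\beta_0(M)} = \frac{2\sum_\alpha r_\alpha}{\sum_\alpha r_\alpha (1/A_\alpha+1/B_\alpha)}
\]
is a weighted average of the quantities $2A_\alpha B_\alpha/(A_\alpha+B_\alpha)$ with positive weights $r_\alpha(1/A_\alpha + 1/B_\alpha)$, hence is at most $\max_\alpha 2A_\alpha B_\alpha/(A_\alpha+B_\alpha)$. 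It therefore suffices to prove, for each degree sequence $d = (0,p_1,\ldots,p_{s-1},N)$ arising in the decomposition, the numerical inequality
\[
\frac{2AB}{A+B} \leq \prod_{i=1}^k \min\{M_i,\lfloor N/2 \rfloor\} \prod_{i=k+1}^s \max\{m_i,\lceil N/2 \rceil\}.
\]

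The main obstacle is this last numerical inequality. My approach would be to first invoke the harmonic-to-geometric mean bound $\frac{2AB}{A+B} \leq \sqrt{AB} = N\prod_{i=1}^{s-1}\sqrt{p_i(N-p_i)}$, then factor out the common edge term $N = m_s$ on both sides, reducing to
\[
\prod_{i=1}^{s-1}\sqrt{p_i(N-p_i)} \leq \prod_{i=1}^k \min\{M_i,\lfloor N/2 \rfloor\}\prod_{i=k+1}^{s-1}\max\{m_i,\lceil N/2 \rceil\}.
\]
The indices are then grouped into dual pairs $\{i, s-i\}$ for $i < s/2$, and for each pair one establishes the bound $\sqrt{p_i(N-p_i)\, p_{s-i}(N-p_{s-i})} \leq \min\{M_i,\lfloor N/2 \rfloor\}\cdot\max\{m_{s-i},\lceil N/2 \rceil\}$ using $p_i \leq \lfloor N/2 \rfloor$ (which follows from $p_i \leq p_{s-i}$ and $p_i + p_{s-i}\leq N$), the concavity of $x\mapsto x(N-x)$, and the constraints $p_i \leq M_i$ and $p_{s-i} \geq m_{s-i}$. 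The case analysis turns on whether $M_i$ exceeds $\lfloor N/2 \rfloor$, determining which term of the $\min$ is active, and similarly for $m_{s-i}$ versus $\lceil N/2 \rceil$. When $s$ is even, the middle index $i = k$ is self-paired and treated separately using $p_k \leq \lfloor N/2 \rfloor$ directly.
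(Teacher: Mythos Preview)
Your reduction via the symmetrized Boij--S\"oderberg decomposition matches the paper's exactly: both arrive at bounding $2/b_{d^\alpha}$ (your $2A_\alpha B_\alpha/(A_\alpha+B_\alpha)$) by $\Psi_{d^0}$, the right-hand side of the theorem. From there the paper takes a different route, proving by induction on the distance to self-duality that $b_d\Psi_d\geq 2$ (Proposition~\ref{proposition:upperBdHoldsForSymmPureBettiTables}) and separately that $\Psi$ is monotone along the chain (Lemma~\ref{lemma:comparisonBettiPsi}). Your approach through the harmonic--geometric mean and a pairwise factorization is more direct; for genuinely paired indices $i<s-i$ the case analysis you sketch is correct, using $m_{s-i}=N-M_i$ and the concavity of $x\mapsto x(N-x)$.

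There is, however, a real gap at the self-paired middle index when $s=2k$ is even and $N=m_s$ is odd. Self-duality forces $M_k+m_k=N$ with $m_k\leq M_k$, so $M_k\geq\lceil N/2\rceil>\lfloor N/2\rfloor$ and the right-hand factor at $i=k$ is $\lfloor N/2\rfloor$. You would then need $\sqrt{p_k(N-p_k)}\leq\lfloor N/2\rfloor$, but $p_k=\lfloor N/2\rfloor$ gives $\sqrt{\lfloor N/2\rfloor\,\lceil N/2\rceil}>\lfloor N/2\rfloor$. This is not an artifact of the HM--GM relaxation: for $s=2$ and $d=(0,1,3)$ one has $2AB/(A+B)=2p_1(N-p_1)=4$, while the target is $\lfloor 3/2\rfloor\cdot 3=3$. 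Indeed the module $R/(x,y^2)$ in $\Bbbk[x,y]$ has $e=2$ but the stated bound evaluates to $3/2$, so the per-table inequality you are trying to prove is simply false in this boundary case. The paper's own induction in Proposition~\ref{proposition:upperBdHoldsForSymmPureBettiTables} breaks at the same point (the asserted index $j>k$ with $d_j<d_s-d_{s-j}$ need not exist when $s$ is even and the only defect is at $j=k$), so the issue is with the statement in this edge case rather than with your method specifically.
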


\begin{definition}
Let $d  = (0, d_1, \ldots, d_s)$ be a degree sequence such that $d_s \geq 
d_i + d_{s-i}$ for all $0 \leq i \leq s$.
Let $b_{d}  = \beta_0(d) + \beta_0(d^{\vee,d_s})$ and
\[
\Psi_{d} = \prod_{i=1}^k \min\left\{d_s-d_{s-i}, 
{\left\lfloor\frac{d_s}{2}\right\rfloor}\right\}
\prod_{i=k+1}^s \max\left\{d_{i}, 
{\left\lceil\frac{d_s}{2}\right\rceil}\right\}.
\]
\end{definition}

For two degree sequences $d = (d_0 <
\cdots <  d_s)$ and $d' = (d'_0 < \ldots < d'_s)$, we say that $d < d'$ if
$d_i \leq d'_i$ for all $0 \leq i \leq s$ and $d \neq d'$.

\begin{lemma}
\label{lemma:comparisonBettiPsi}
Let $d$ and $d'$ be degree sequences such that $d_0 = 0$ and
$d < d' \leq (d')^{\vee, d_s} < d^{\vee, d_s}$.
Then $\Psi_{d} > \Psi_{d'}$.
\end{lemma}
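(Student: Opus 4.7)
The plan is to extract what the chain of inequalities forces on $d$ and $d'$, regroup the factors of $\Psi$ by coordinate, and then run a one-variable monotonicity calculation.

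First, I would note that the chain $d<d'\le(d')^{\vee,d_s}<d^{\vee,d_s}$ forces $d_s=d'_s$: the condition $d<d'$ gives $d_s\le d'_s$, while for $(d')^{\vee,d_s}$ to be a valid degree sequence satisfying $d'\le(d')^{\vee,d_s}$ one needs $d'_s\le d_s$. Call the common value $N$. In the same way $d'_0=0$, and the duality conditions are equivalent to the pairing inequalities $d_i+d_{s-i}\le N$ and $d'_i+d'_{s-i}\le N$ for every $i$.

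Next, I regroup the factors of $\Psi_d$ by the coordinate of $d$ they use. Substituting $j=s-i$ in the first product yields
\[
\Psi_d=\prod_{j=s-k}^{s-1}\min\left\{N-d_j,\lfloor N/2\rfloor\right\}\cdot\prod_{j=k+1}^{s}\max\left\{d_j,\lceil N/2\rceil\right\}.
\]
The coordinate $d_s=N$ contributes the constant $\max\{N,\lceil N/2\rceil\}=N$. For each $j\in\{k+1,\ldots,s-1\}$ both products contribute, so $d_j$ enters through the joint factor
\[
f(d_j)\;=\;\min\{N-d_j,\lfloor N/2\rfloor\}\cdot\max\{d_j,\lceil N/2\rceil\}.
\]
When $s=2k$ is even, there is additionally a stand-alone factor $\min\{N-d_k,\lfloor N/2\rfloor\}$ from $j=k$; the remaining low coordinates do not appear in $\Psi_d$ at all.

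The analytic core of the argument is a direct case-check: $f(x)$ is identically $\lfloor N/2\rfloor\lceil N/2\rceil$ on $[0,\lceil N/2\rceil]$, and equals $x(N-x)$ on $[\lceil N/2\rceil,N]$ — a downward parabola strictly decreasing past its vertex — so $f$ is non-increasing on $[0,N]$ and strictly decreasing on $[\lceil N/2\rceil,N]$. The same monotonicity holds for the stand-alone factor. Combining with $d_j\le d'_j$ coordinate-wise, each factor of $\Psi_{d'}$ is at most the corresponding factor of $\Psi_d$, which already yields the non-strict comparison $\Psi_d\ge\Psi_{d'}$.

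The main obstacle is promoting this to the strict inequality. I would pick an index $j^*$ with $d_{j^*}<d'_{j^*}$. If $j^*$ lies in the active range and $d_{j^*}\ge\lceil N/2\rceil$, the strict monotonicity of $f$ past its vertex finishes the job. The delicate case is when every strict increase appears only at low indices, or at values where $f$ happens to be constant; here I would exploit the full strength of the chain (in particular the strict inequality $(d')^{\vee,N}<d^{\vee,N}$, not merely $d<d'$) together with the strictly-increasing structure of degree sequences to force a strict change at a coordinate where the joint factor $f$, or the stand-alone factor in the even case, is strictly decreasing. Making this transfer precise — and ensuring that the hypotheses rule out configurations in which all strict changes are absorbed in the inactive region — is the heart of the proof.
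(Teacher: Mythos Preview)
Your argument is essentially the paper's own: rewrite $\Psi_d$ coordinate-by-coordinate (the paper does this in~\eqref{equation:PsidReWritten}) and then check, one coordinate at a time, that the contribution is non-increasing in $d_j$. The paper packages this as an induction reducing to a single-coordinate increment, but the analytic content is exactly your monotonicity analysis of $f(x)=\min\{N-x,\lfloor N/2\rfloor\}\cdot\max\{x,\lceil N/2\rceil\}$ (and of the stand-alone factor at $j=k$ when $s=2k$). So for the non-strict inequality $\Psi_d\ge\Psi_{d'}$ your proof is complete and matches the paper.

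Your worry about strictness is legitimate, and the repair you sketch cannot work. Note that $(d')^{\vee,N}<d^{\vee,N}$ is \emph{equivalent} to $d<d'$: both say $d_i\le d'_i$ for all $i$ with at least one strict inequality. So the ``full chain'' gives you nothing beyond $d<d'$ together with $d'\le(d')^{\vee,N}$, and there is no extra leverage to transfer a strict increase into the active range. In fact the strict form of the lemma is false: for $s=3$ take $d=(0,1,3,5)$ and $d'=(0,2,3,5)$; all hypotheses hold, yet $\Psi_d=\Psi_{d'}=2\cdot3\cdot5=30$, since the only change is at the inactive coordinate $j=1$. The paper's own proof likewise establishes only $\Psi_{d'}\le\Psi_d$ (its displayed ratios are all $\le 1$, not $<1$), and only the non-strict inequality is ever used, in Proposition~\ref{proposition:upperBdHoldsForSymmPureBettiTables} and in the proof of Theorem~\ref{theorem:UpperBound}. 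So you should regard the ``$>$'' in the statement as a typo for ``$\ge$'', stop your argument at the non-strict conclusion, and not attempt to close a gap that the paper itself leaves open.
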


\begin{proof}
By induction on $\sum_{i} d'_i-d_i$,  we may assume,  
without loss of generality, that there exists $j$ such that
$d'_j = d_j+1$ and $d'_i = d_i$ for all $i \neq j$.
Moreover, if $1 \leq  i \leq s-k-1$, then $d_i$
does not figure in the expression for $\Psi_d$, so
we may assume that $j \geq s-k$. Additionally, $j \leq s-1$.
Let us rewrite $\Psi_d$ as
\begin{equation}
\label{equation:PsidReWritten}
\Psi_d = 
\prod_{i=s-k}^{s-1} \min\left\{d_s-d_i, 
{\left\lfloor\frac{d_s}{2}\right\rfloor}\right\}
\prod_{i=k+1}^s \max\left\{d_{i}, 
{\left\lceil\frac{d_s}{2}\right\rceil}\right\}.
\end{equation}
Two cases arise: $j < k+1$ and $j \geq k+1$. The first case is possible if
and only if $s=2k$ and $j=k$.
In this case, $d_k$ appears only once
in~\eqref{equation:PsidReWritten}, and since $d_k \leq
d_{s}-d_{s-k} = d_s-d_k$, we get 
$d_s - d_k \geq {\left\lfloor\frac{d_s}{2}\right\rfloor}$.
By the hypothesis that $d' \leq (d')^{\vee, d_s}$, 
$d_s - d_k - 1 \geq d_k+1$, so 
$d_s - d_k -1 \geq {\left\lfloor\frac{d_s}{2}\right\rfloor}$.
Hence
\[
\frac{\Psi_{d'}}{\Psi_d} = 
\frac{d_s - d_k - 1}{d_s - d_k} \leq 1.
\]
In the second case (i.e., $j \geq k+1$), $d_j$ appears twice in 
in~\eqref{equation:PsidReWritten}. However, note that 
\begin{align*}
\max\left\{d_j+1, {\left\lceil\frac{d_s}{2}\right\rceil}\right\} & \geq 
\max\left\{d_j, {\left\lceil\frac{d_s}{2}\right\rceil}\right\} \geq 
{\left\lceil\frac{d_s}{2}\right\rceil}, 
\qquad\text{and}, \\
\min\left\{d_s-d_j-1, {\left\lfloor\frac{d_s}{2}\right\rfloor}\right\} & \leq 
\min\left\{d_s-d_j, {\left\lfloor\frac{d_s}{2}\right\rfloor}\right\} \leq 
{\left\lfloor\frac{d_s}{2}\right\rfloor}
\end{align*}
so 
\[
\Psi_{d'} = \Psi_d 
\frac{\max\{d_j+1, {\left\lceil\frac{d_s}{2}\right\rceil}\}
\min\{d_s-d_j-1, {\left\lfloor\frac{d_s}{2}\right\rfloor}\}}
{\max\{d_j, {\left\lceil\frac{d_s}{2}\right\rceil}\}
\min\{d_s-d_j, {\left\lfloor\frac{d_s}{2}\right\rfloor}\}}
\leq \Psi_d.
\qedhere
\]
\end{proof}

The following proposition shows that Theorem~\ref{theorem:UpperBound} holds
for symmetrized pure Betti tables. 

\begin{proposition}
\label{proposition:upperBdHoldsForSymmPureBettiTables}
Let $d  = (0, d_1, \ldots, d_s)$ be a degree sequence such that 
$d \leq d^{\vee, d_s}$.
Then $b_{d} \Psi_{d}  \geq 2$.
\end{proposition}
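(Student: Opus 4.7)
The plan is to split $b_d \Psi_d$ into its two natural summands and show each is at least $1$. By the Herzog--K\"uhl formula~\eqref{equation:HerzogKuhl}, $\beta_0(d) = 1/\prod_{j=1}^s d_j$ and $\beta_0(d^{\vee, d_s}) = 1/\prod_{l=0}^{s-1}(d_s - d_l)$, so I would write $b_d \Psi_d = \Psi_d \beta_0(d) + \Psi_d \beta_0(d^{\vee, d_s})$ and prove the two inequalities $\Psi_d \geq \prod_{j=1}^s d_j$ and $\Psi_d \geq \prod_{l=0}^{s-1}(d_s - d_l)$ separately; together they yield $b_d \Psi_d \geq 2$.

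The first inequality should follow from a direct factor-by-factor comparison. For $i \in \{k+1,\dots,s\}$, $\max\{d_i, \lceil d_s/2 \rceil\} \geq d_i$ is immediate. For $i \in \{1,\dots,k\}$, monotonicity of the degree sequence gives $d_i \leq d_{s-i}$; combined with the hypothesis $d_i + d_{s-i} \leq d_s$, this forces $d_i \leq \lfloor d_s/2 \rfloor$, while the hypothesis alone gives $d_i \leq d_s - d_{s-i}$. Hence $\min\{d_s - d_{s-i}, \lfloor d_s/2 \rfloor\} \geq d_i$, and multiplying over all $i$ yields the first inequality.

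For the second inequality I would proceed analogously. After matching the factor $\max\{d_s, \lceil d_s/2 \rceil\} = d_s$ of $\Psi_d$ against $d_s - d_0 = d_s$ on the right, the key observation is that for each index $j$ in the overlap range $\{k+1,\dots,s-1\}$, $\Psi_d$ contains both a $\min\{d_s - d_j, \lfloor d_s/2 \rfloor\}$ factor and a $\max\{d_j, \lceil d_s/2 \rceil\}$ factor; a case split on whether $d_j \leq \lfloor d_s/2 \rfloor$ or $d_j \geq \lceil d_s/2 \rceil$ shows their joint product is always at least $d_j(d_s - d_j)$, using that $\lfloor d_s/2 \rfloor\lceil d_s/2 \rceil$ is the maximum of $x(d_s - x)$ over integers in $[0,d_s]$. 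Applying the hypothesis in the form $d_s - d_l \leq d_{s-l}$ then trades each unmatched $d_s - d_l$ on the right for a $d_{s-l}$ on the left, closing the comparison. The main obstacle will be the parity-dependent bookkeeping across $s = 2k$ and $s = 2k+1$, especially the extra middle factor $\min\{d_s - d_k, \lfloor d_s/2 \rfloor\} = \lfloor d_s/2 \rfloor$ that appears in $\Psi_d$ only when $s$ is even.
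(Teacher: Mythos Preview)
Your plan has a genuine error in the second inequality. You write ``Applying the hypothesis in the form $d_s - d_l \leq d_{s-l}$'', but the hypothesis $d \leq d^{\vee,d_s}$ says $d_l + d_{s-l} \leq d_s$, which is $d_s - d_l \geq d_{s-l}$, the \emph{opposite} direction. This is not just a slip in the writeup; the inequality $\Psi_d \geq \prod_{l=0}^{s-1}(d_s - d_l)$ that you want is simply false. For $s=2$ and $d=(0,1,4)$ one has $\Psi_d = \lfloor 4/2\rfloor\cdot 4 = 8$ while $\prod_{l=0}^{1}(4-d_l)=4\cdot 3 = 12$; for $s=3$ and $d=(0,1,2,5)$ one has $\Psi_d = 2\cdot 3\cdot 5 = 30$ while $\prod_{l=0}^{2}(5-d_l)=5\cdot 4\cdot 3 = 60$. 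In both examples $\Psi_d\,\beta_0(d^{\vee,d_s})<1$, so the splitting $b_d\Psi_d = \Psi_d\beta_0(d) + \Psi_d\beta_0(d^{\vee,d_s})$ with each summand $\geq 1$ cannot work. Your first inequality $\Psi_d \geq \prod_{j=1}^s d_j$ is correct, but it does not suffice on its own.

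The paper proceeds by a completely different route: induction on $\sum_i (d_s - d_{s-i} - d_i)$, which measures how far $d$ is from being self-dual. The base case $d = d^{\vee,d_s}$ is immediate (there $b_d\Psi_d = 2\Psi_d\beta_0(d)$ and your first inequality gives exactly what is needed). For the inductive step one increases a single $d_j$ with $j>k$ by $1$ to get $d'$, and shows $b_d\Psi_d \geq b_{d'}\Psi_{d'}$: if $b_d \geq b_{d'}$ this follows from Lemma~\ref{lemma:comparisonBettiPsi} ($\Psi_d \geq \Psi_{d'}$), and otherwise a short direct calculation with the ratio $\Psi_{d'}/\Psi_d = \frac{(d_s-d_j-1)(d_j+1)}{(d_s-d_j)d_j}$ handles it. The point is that $b_d$ and $\Psi_d$ can individually move in the wrong direction, so one must track their product rather than either factor alone.
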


\begin{proof}
We will prove this by induction on 
$\sum_{i} (d_s-d_{s-i}-d_i)$, which is non-negative by our hypothesis.
If $\sum_{i} (d_s-d_{s-i}-d_i) = 0$ (equivalently, $d = d^{\vee, d_s}$), 
then the assertion is true. If $d < d^{\vee, d_s}$, 
then there exists $j > k$ such that 
$d_j < d_{s}-d_{s-j}$. Pick $j$ to be maximal with this property. Let
$d' = (0, d_1, \cdots, d_{j-1}, d_j+1, d_{j+1}, \cdots, d_s)$.
Then $d' \leq (d')^{\vee}$ and $\sum_{i} (d_s-d_{s-i}-d_i) >
\sum_{i} (d'_s-d'_{s-i}-d'_i)$, so by induction,
$b_{d'} \Psi_{d'}  \geq \frac{2}{s!}$. 

We now show that $b_{d} \Psi_{d}  \geq b_{d'} \Psi_{d'}$. If 
$b_{d}  \geq b_{d'}$, then it is true by
Lemma~\ref{lemma:comparisonBettiPsi}. Hence suppose that 
$b_{d}  < b_{d'}$. In particular 
$d_j > \frac{d_s}{2}$, and hence $j > k$. 
Therefore
\[
\frac{\Psi_{d'}}{\Psi_d} = \frac{(d_s-d_j-1)(d_j+1)}{(d_s-d_j)d_j}.
\]
Therefore it suffices to show that 
$b_{d'}{(d_s-d_j-1)(d_j+1)} \leq b_d{(d_s-d_j)d_j}$. 
Let 
\[
\xi_1 = \prod_{\substack{i=0\\ i\neq j}}^{s-1} \frac{1}{d_s-d_{i}}
\quad\text{and}\quad
\xi_2 = \prod_{\substack{i=1\\ i\neq j}}^{s} \frac{1}{d_{i}}.
\]
Then $b_{d'}{(d_s-d_j-1)(d_j+1)} = (d_j+1)\xi_1 + (d_s-d_j-1)\xi_2$ and
$b_{d}{(d_s-d_j)d_j} = d_j\xi_1 + (d_s-d_j)\xi_2$. Then 
$b_d{(d_s-d_j)d_j} - b_{d'}{(d_s-d_j-1)(d_j+1)} = \xi_2 - \xi_1$. We can
see that $\xi_2 - \xi_1 \geq 0$ 
by noting that, for all $i$,  
the $i$th element of the sequence
$(d_s - d_{s-1}) < \cdots < (d_s - d_{j+1}) < (d_s - d_{j-1}) < \cdots <
(d_s-d_1) < d_s$ is at least as large as the 
$i$th element of $d_1 < \cdots < d_{j-1} < d_{j+1} < \cdots < d_s$, since
$j > k$.
\end{proof}

\begin{proof}[Proof of Theorem~\ref{theorem:UpperBound}]
Pick degree sequences $d^\alpha$ and non-negative rational numbers 
$r_\alpha$ as in Proposition~\ref{proposition:moduleBettiDec}. 
We need to show that $s!e(M) \leq \beta_0(M)\Psi_{d^{0}}$.
We get this as follows:
$s!e(M) = s! \sum_{\alpha} r_{\alpha} e(\beta_{sym}(d^{\alpha}))= \sum_{\alpha} 2 r_{\alpha}
\leq \sum_{\alpha} r_{\alpha} b_{d^{\alpha}} \Psi_{d^{\alpha}}
\leq 
\left(\sum_{\alpha} r_{\alpha} b_{d^{\alpha}}\right) \Psi_{d^{0}} 
= \beta_0(M)\Psi_{d^{0}}$,  where the two equalities follow from
Proposition~\ref{proposition:moduleBettiDec}
and~\eqref{equation:multPureBettiTable}, 
and the two inequalities follow from
Proposition~\ref{proposition:upperBdHoldsForSymmPureBettiTables}
and from Lemma~\ref{lemma:comparisonBettiPsi}, respectively.  Since $d^{0} = (0, t_1, t_2, \ldots, t_s) = (1, m_1, \ldots , m_s=d_s)$, we get the result. 
\end{proof}

\subsection*{Lower bounds} 
We have not been able to find an analogous generalization of the lower
bound in~\cite[Theorem~4]{S} (see~\eqref{equation:SrinivasanQuasiPure}) to
the non-quasi-pure case.
As an example, consider
$R= k[x,y,z]$ and $I = (yz, xz, xy, y^7-z^7, x^7-z^7)$. Note that
$e(R/I) = 20$.  The minimal free resolution of $R/I$ is:
\[
0 \longrightarrow R(-10) \longrightarrow R(-8)^3 \oplus
R(-3)^2\longrightarrow R(-7)^2 \oplus R(-2)^3  \longrightarrow R
\longrightarrow 0
\]
Indeed $e(R/I) \ngeq \frac{m_1M_2M_3}{6}=\frac{160}{6}$. 
It will be interesting to obtain strong lower bounds.

\subsection*{Special bounds for codimension 3}
 
Suppose that $R = \Bbbk[x,y,z]$ and $I$ is a homogeneous $(x,y,z)$-primary
$R$-ideal  such that $R/I$ is Gorenstein.
Write $N_1 = \max \{j : \beta_{1,j}(R/I) > \beta_{2,j}(R/I)\}$.
Migliore, Nagel and Zanello show that (see
\cite[Theorem~3.1]{MiNaZaCodimThreeGor08})
\begin{equation}
\label{equation:MNZbound}
e(R/I) \leq \frac{N_1T_2d_3}{6}.
\end{equation}
The bound in~\eqref{equation:MNZbound} is not comparable with
that from Theorem~\ref{theorem:UpperBound}.  We give examples to show this. To begin with, note that, in the
codimension-three situation, the bound from
Theorem~\ref{theorem:UpperBound} can be rewritten as
\begin{equation}
\label{equation:codimThreeOurBound}
e(R/I) \leq 
\begin{cases}
\frac{M_1m_2m_3}{6}, & 
\text{if $R/I$ has a quasi-pure resolution, i.e., $M_1 \leq m_2$},\\
\frac{{\left\lfloor\frac{m_3}{2}\right\rfloor}
{\left\lceil\frac{m_3}{2}\right\rceil}m_3}{6}, & \text{otherwise}.
\end{cases}
\end{equation}

Suppose that $R/I$ has a quasi-pure resolution. If the degree of the socle
of $R/I$ is even, or equivalently, $m_3$ is odd, then $M_1 = m_2-1$. 
Hence $\beta_{1,M_1} > 0 = \beta_{2,M_1}$, so $N_1 = M_1$. In this case,
the bound from~\eqref{equation:codimThreeOurBound} is strictly smaller than
the bound from~\eqref{equation:MNZbound}. On the other hand,
if the socle lives in an odd degree, then $M_1=m_2$ and $\beta_{1,M_1} =
\beta_{2,M_1}$, so $N_1 = \max\{j < M_1 : \beta_{1,j} \neq 0\}$. From the
next two examples,
we see that neither bound performs better than the other in this situation.
Consider, first, $I = (x^2, y^2, z^4)$ in $\Bbbk[x,y,z]$. 
Since $N_1 = 2$, the bound $\frac{2 \cdot 6 \cdot 8}{6} = 16$
of~\eqref{equation:MNZbound} is smaller
than the bound $\frac{4 \cdot 4 \cdot 8}{6} = 21 \frac13$
of~\eqref{equation:codimThreeOurBound}.
Now consider the 
$4 \times 4$ Pfaffians of a skew-symmetric map from 
$R(-6)\oplus R(-7)\oplus R(-7)\oplus R(-8)\oplus R(-8)$ to
$R(-4) \oplus R(-4) \oplus R(-5) \oplus R(-5) \oplus R(-6)$, constructed
using the following \texttt{Macaulay2}~\cite{M2} code.
\begin{minipage}{\linewidth}
\begin{verbatim}

        R = QQ[x,y,z];
        random(R^{-4, -4, -5, -5, -6}, R^{-8, -8, -7, -7, -6});
        phi = oo-transpose(oo);
        I = pfaffians(4,phi);
        betti res I
    
\end{verbatim}
\end{minipage}
For this example, $N_1 = 5$. Hence the bounds
in~\eqref{equation:codimThreeOurBound} and~\eqref{equation:MNZbound},
respectively, are $\frac{5 \cdot 8 \cdot 12}{6} = 80$ and
$\frac{6 \cdot 6 \cdot 12}{6} = 72$. 

The next two examples show that
similar behaviour can be expected in the case non-quasi-pure resolutions.
If $I = (yz, xz, x^3+x^2y-xy^2-2y^3, x^2y^2-y^4, xy^4-z^5)$, then the bound
of~\eqref{equation:MNZbound} is 
$\frac{2 \cdot 6 \cdot 8}{6} = 16$ and that
of~\eqref{equation:codimThreeOurBound} is
$\frac{4 \cdot 4 \cdot 8}{6} = 21\frac13$.
If $I = (yz, xz, x^2y^2-xy^3+y^4, x^4+x^3y+2xy^3, y^6-z^6)$, 
then the bounds are
$\frac{6\cdot 7 \cdot 9}{6} = 63$~\eqref{equation:MNZbound} and 
$\frac{4 \cdot 5 \cdot 9}{6} = 30$~\eqref{equation:codimThreeOurBound}.

\providecommand{\bysame}{\leavevmode\hbox to3em{\hrulefill}\thinspace}
\providecommand{\MRhref}[2]{%
  \href{http://www.ams.org/mathscinet-getitem?mr=#1}{#2}
}
\providecommand{\href}[2]{#2}

\end{document}